\documentclass[12pt]{article}

\usepackage{amsmath, amssymb, amsthm}
\usepackage{geometry}
\usepackage{hyperref}
\usepackage{enumitem}
\usepackage{xcolor}
\usepackage{tikz}
\usepackage{graphicx}

\DeclareMathOperator{\End}{end}

\newtheorem{theorem}{Theorem}[section]
\newtheorem{lemma}[theorem]{Lemma}

\newtheorem{corollary}[theorem]{Corollary}

\theoremstyle{definition}

\theoremstyle{remark}

\title{Extremal graphs with cycle lengths in a fixed residue class}
\author{Takahiro Ueoro \thanks{Yokohama National University, Japan. \texttt{ueoro-takahiro-wc@ynu.jp}. }}
\date{\today}

\begin{document}

\maketitle

\begin{abstract}
For integers $k\ge3$ and $0\le \ell<k$, we call a graph $(\ell\bmod k)$-cycle-restricted if every cycle has length congruent to $\ell$ modulo $k$. 
Akbari, Jamaali, Mahmoody, and Seyed Fakhari
[On the size of graphs whose cycles have length divisible by a fixed integer, Australas. J. Combin. 45 (2009), 67--72]
determined the maximum number of edges in a $2$-connected $(0\bmod k)$-cycle-restricted graph and characterized the graphs attaining equality. 
In this paper, we determine the maximum number of edges in a $2$-connected $(\ell\bmod k)$-cycle-restricted graph for every $\ell \neq 0$, and completely characterize all graphs attaining equality. 
Combined with the result of Akbari et al., our result gives a complete solution for all residue classes modulo $k$. 
\end{abstract}

\section{Introduction}
For integers $0 \le \ell < k$, an \emph{$(\ell \bmod k)$-cycle} is a cycle whose length is congruent to $\ell$ modulo $k$. 
We denote by $ex(n,\mathcal{C}_{\ell\bmod k})$ the maximum number of edges in an $n$-vertex graph containing no $(\ell \bmod k)$-cycle. 

Cycles with prescribed lengths modulo an integer have been extensively studied in extremal graph theory. 
One classical direction is the extremal problem of determining the maximum number of edges in a graph containing no $(\ell \bmod k)$-cycle. 
Bollob\'as~\cite{Bollobas1977} proved that if $(k,\ell)\not\equiv(0,1)\pmod 2$, then there exists a constant $c_{\ell,k}>0$ such that $ex(n,\mathcal{C}_{\ell\bmod k}) \le c_{\ell,k}n$ for every sufficiently large integer $n$. 

We now consider graphs satisfying a condition in the opposite direction. 
For integers $0 \le \ell < k$, a graph $G$ is called an \emph{$(\ell \bmod k)$-cycle-restricted graph} if every cycle of $G$ is an $(\ell \bmod k)$-cycle. 
Since every cycle of a graph is contained in a $2$-connected block, a graph is $(\ell\bmod k)$-cycle-restricted if and only if each of its $2$-connected blocks is $(\ell\bmod k)$-cycle-restricted. 
Thus, it is natural to consider the extremal problem first in the class of $2$-connected graphs.

The structure of such graphs was studied by Ando, Hagita, Kaneko, Kano, Kawarabayashi, and Saito~\cite{Ando2003}. 
They considered $2$-connected multigraphs in which all cycle lengths are congruent to the same residue modulo a fixed integer. 
By suppressing maximal paths whose internal vertices have degree two, they formulated the problem in terms of weighted multigraphs. 
The following is a reformulation of \cite[Theorem~5]{Ando2003} in terms of unweighted graphs. 

\begin{theorem}[Ando et al.~\cite{Ando2003}] \label{thm:Ando}
Let $b\ge 2$ and $0\le a<b$ be integers. 
Let $H$ be a $2$-connected multigraph in which the length of every cycle is congruent to $a$ modulo $b$. 
If $H$ is a subdivision of a simple graph of minimum degree at least $3$, then $a=0$. 
\end{theorem}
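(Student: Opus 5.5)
The plan is to find a subdivision of $K_4$ inside $H$ and use only the seven cycles of that subdivision to force $a\equiv 0 \pmod b$.

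\emph{Step 1: a $K_4$-subdivision in $H$.} Let $G$ be the simple graph of minimum degree at least $3$ of which $H$ is a subdivision. By Dirac's classical theorem, every simple graph with minimum degree at least $3$ contains a subdivision of $K_4$. Since $H$ is a subdivision of $G$, a subdivision of a subdivision of $K_4$ is again a subdivision of $K_4$. Hence $H$ contains a subgraph $K$ that is a subdivision of $K_4$. Every cycle of $K$ is a cycle of $H$, so every cycle of $K$ has length congruent to $a$ modulo $b$. (The $2$-connectivity of $H$ is not actually needed for this step.)

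\emph{Step 2: set up the congruences.} Label the branch vertices of $K$ by $1,2,3,4$. For $i<j$, let $x_{ij}\ge 1$ be the length of the path of $K$ joining $i$ and $j$. The cycles of $K$ correspond exactly to the cycles of $K_4$:
\begin{itemize}
\item four "triangles", for example $x_{12}+x_{23}+x_{13}\equiv a$;
\item three "quadrilaterals", for example $x_{12}+x_{23}+x_{34}+x_{14}\equiv a$.
\end{itemize}

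\emph{Step 3: each path has "half length" $a/2$.} Add the triangle congruences for $123$ and $134$. The left side equals the quadrilateral $1234$ plus $2x_{13}$. This gives $2a\equiv a+2x_{13}$, that is, $2x_{13}\equiv a \pmod b$. Each edge $ij$ of $K_4$ is the common diagonal of two triangles whose symmetric difference is a quadrilateral. So by symmetry $2x_{ij}\equiv a$ for all six pairs.

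\emph{Step 4: conclude.} Doubling a triangle congruence gives $2x_{12}+2x_{23}+2x_{13}\equiv 2a$. Substituting Step 3, the left side is $\equiv 3a$, so $3a\equiv 2a$, i.e. $a\equiv 0 \pmod b$. Since $0\le a<b$, this forces $a=0$.

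The only substantive input is the existence of the $K_4$-subdivision, which I take from Dirac's theorem; the rest is linear bookkeeping. The point needing care is that $H$ may be a multigraph, but the hypothesis says the underlying $G$ is simple with $\delta(G)\ge 3$. So we apply Dirac's theorem to $G$, not to $H$, and then transfer the subdivision to $H$.
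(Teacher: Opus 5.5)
Your proof is correct. Dirac's theorem applied to the simple graph $G$ gives a subdivision of $K_4$ inside $H$, and the bookkeeping is right. Equivalently, the four triangles and the three quadrilaterals of $K_4$ each cover every branch path exactly twice, so $4a\equiv 3a \pmod b$.

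The paper does not prove this statement; it imports it from Ando et al., who work with weighted multigraphs. The natural comparison is therefore with the paper's direct proof of planarity in part~$(1)$ of Theorem~\ref{thm_re_main}. That proof uses the same double-counting idea on subdivisions of $K_5$ and $K_{3,3}$: one quantity is written once as a sum of three cycles and once as a sum of two cycles, giving $3\ell\equiv 2\ell$. It then invokes Kuratowski's theorem.

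Your route replaces Kuratowski with Dirac and gets more out of it:
\begin{itemize}
\item It proves the cited theorem outright, without using $2$-connectivity.
\item It directly gives the stronger fact noted after Corollary~\ref{cor:no-subdivision}: a cycle-restricted graph with nonzero residue contains no subdivision of $K_4$.
\item Since $K_5$ and $K_{3,3}$ both contain a subdivision of $K_4$, this makes the paper's two separate computations redundant.
\end{itemize}
The paper's computation works only with the Kuratowski graphs, so it yields only planarity. That is all the later Euler-formula arguments need.

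Your insistence on applying Dirac to $G$ rather than to $H$ is exactly where the hypothesis matters. A $\theta_d$-graph with at least three paths is a subdivision of a multigraph of minimum degree at least $3$, and all its cycles have length $2d$. So without simplicity of $G$, a nonzero residue $a=2d<b$ is possible.
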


As an immediate consequence of the above theorem, we obtain the following. 

\begin{corollary} \label{cor:no-subdivision}
Let $H$ be a $2$-connected simple graph with minimum degree at least $3$, and let $1 \le \ell < k$ be integers.  Then no $(\ell\bmod k)$-cycle-restricted graph contains a subdivision of $H$.  
\end{corollary}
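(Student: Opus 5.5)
We argue by contradiction. Suppose $G$ is an $(\ell\bmod k)$-cycle-restricted graph with $1\le \ell<k$, and $G$ contains a subgraph $H'$ that is a subdivision of $H$. The plan is to apply Theorem~\ref{thm:Ando} to $H'$ with $b=k$ and $a=\ell$. This would force $\ell=0$, contradicting $\ell\ge 1$.

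We verify the hypotheses of Theorem~\ref{thm:Ando} in turn.
\begin{enumerate}[label=(\roman*)]
\item $H'$ is a subdivision of the simple graph $H$, which has minimum degree at least $3$. This holds by assumption.
\item $H'$ is $2$-connected. Subdividing an edge of a $2$-connected graph yields a $2$-connected graph. Indeed, any two vertices of the subdivision lie on a common cycle: we take a cycle of $H$ through the corresponding branch vertices or edges and replace each edge by its subdivided path. Alternatively, deleting a single vertex of $H'$ cannot disconnect it, because deleting a branch vertex corresponds to deleting a vertex of $H$, and deleting a subdividing vertex corresponds to deleting an edge of $H$. A $2$-connected graph with minimum degree at least $3$ is in particular $2$-edge-connected. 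Note also that $H$ has at least $4$ vertices, so $H'$ has at least $3$ vertices, as $2$-connectivity requires.
\item Every cycle of $H'$ has length congruent to $\ell$ modulo $k$. Each such cycle is a subgraph of $G$, hence a cycle of $G$, and $G$ is $(\ell\bmod k)$-cycle-restricted.
\end{enumerate}
The constants also satisfy the theorem's conditions: $b=k\ge 2$ and $0\le a=\ell<b$.

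Theorem~\ref{thm:Ando} then gives $a=\ell=0$, a contradiction. There is no real obstacle here. The only point needing a line of justification is that $2$-connectivity is preserved under subdivision, and we handle it by the vertex-deletion argument in (ii).
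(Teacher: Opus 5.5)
Your proposal is correct and follows exactly the route the paper intends: the corollary is stated there as an immediate consequence of Theorem~\ref{thm:Ando}, applied to the subdivision of $H$ inside the cycle-restricted graph with $b=k\ge 2$ and $a=\ell$, which forces $\ell=0$. Your check of the hypotheses fills in the details the paper leaves implicit, namely that subdivision preserves $2$-connectivity and that every cycle of the subgraph is a cycle of the ambient graph.
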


Corollary~\ref{cor:no-subdivision} has a strong structural consequence. 
In particular, by taking $H=K_4$, we see that every $(\ell \bmod k)$-cycle-restricted graph with $\ell \neq 0$ contains no subdivision of $K_4$. 
Equivalently, every such graph is $K_4$-minor-free, and hence planar. 

The extremal problem for $(0 \bmod k)$-cycle-restricted graphs was studied by Akbari, Jamaali, Mahmoody, and Seyed Fakhari~\cite{Akbari2009}. 
For an integer $d\ge2$, a graph is called a \emph{$\theta_d$-graph} if it consists of at least two internally disjoint paths of length $d$ with the same pair of endvertices. 
Akbari et al.\ proved the following sharp result. 

\begin{theorem}[Akbari et al.~\cite{Akbari2009}, Theorem~1] \label{thm:Akbari}
Let $k\ge3$ be an integer, and let $G$ be a $2$-connected
$(0 \bmod k)$-cycle-restricted graph on $n$ vertices. 
Then the following hold. 
\begin{enumerate}
    \item[$(1)$] If $k$ is odd and $G \neq C_k$, then
    \[
        e(G)\le \frac{k}{k-1}(n-2).
    \]
    Equality holds if and only if $G$ is a $\theta_k$-graph.

    \item[$(2)$] If $k$ is even, then
    \[
        e(G)\le \frac{k}{k-2}(n-2).
    \]
    Equality holds if and only if $G$ is a $\theta_{k/2}$-graph.
\end{enumerate}
\end{theorem}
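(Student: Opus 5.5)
The plan is to treat $G$ as a subdivision of a $2$-connected multigraph and do the counting through its branch vertices. If $G$ is a cycle, then $n\equiv 0 \pmod k$ and $G\neq C_k$ forces $n\ge 2k$. In this case $e(G)=n$, and the required inequality $n\le \frac{k}{k-1}(n-2)$ is equivalent to $n\ge 2k$ (resp. $n\ge k$ for the even bound), so cycles are fine. Otherwise, suppress all degree-two vertices to obtain a $2$-connected multigraph $H$ with minimum degree at least $3$. Each edge of $H$ corresponds to a \emph{thread} of $G$, that is, a maximal path whose internal vertices have degree two. If $H$ has $p$ vertices and $q$ edges, and the threads have lengths $w_1,\dots,w_q$, then $e(G)=\sum w_i$ and $n=p+\sum (w_i-1)=p+e(G)-q$. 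So
\[
e(G)=n-p+q ,
\]
and bounding $e(G)$ amounts to bounding $q-p$ in terms of the thread lengths.

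The key structural input is a lower bound on thread lengths. Any two threads with the same endpoints $u,v$ form a cycle, so their lengths sum to $0 \bmod k$. More generally, take any two cycles $C,C'$ that share a path $P$ and differ elsewhere, so that the third cycle $C\triangle C'$ exists, as in any theta subgraph. Then $|C|+|C'|-2|P|\equiv 0$, which gives $2|P|\equiv 0 \pmod k$. Hence every thread lying in a theta subgraph (which in a $2$-connected non-cycle is every thread) has length $\equiv 0 \pmod k$ when $k$ is odd, and $\equiv 0 \pmod{k/2}$ when $k$ is even. Writing $d=k$ for $k$ odd and $d=k/2$ for $k$ even, we get $w_i\ge d$ for all $i$. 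Therefore
\[
n-p=\sum(w_i-1)\ge q(d-1).
\]

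It then remains to compare $q-p$ with $q(d-1)$. Since $H$ has minimum degree at least $3$, we have $2q\ge 3p$. I would aim for
\[
e(G)=n-p+q\le \tfrac{d}{d-1}(n-2), \quad\text{equivalently}\quad (d-1)(q-p)\le n-p-2(d-1)+\ldots,
\]
so I would rearrange directly to the inequality $(n-2)d-(d-1)(n-p+q)\ge0$, that is,
\[
n-p \;\ge\; (d-1)(q-p)+2d-p .
\]
Using $n-p\ge q(d-1)$, it suffices to have $p(d-1)\ge 2d-p$, i.e. $pd\ge 2d$, i.e. $p\ge 2$, which always holds. Equality forces $p=2$ and every $w_i=d$. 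With $p=2$, $H$ is a bundle of at least three parallel edges between two vertices, and each thread has length $d$, so $G$ is a $\theta_d$-graph. Conversely, a $\theta_d$-graph has all cycles of length $2d\equiv 0 \pmod k$ and attains equality.

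The main obstacle is making the thread-length congruence rigorous for every thread. One must show that each thread $P$ lies on two cycles whose symmetric difference is again a cycle. By $2$-connectivity of $H$ (a non-cycle), each edge lies in a theta subgraph, obtained via an ear decomposition or Menger's theorem, and $P$ can be placed on a shared path of that theta. If $P$ were only on an "arm" of the theta rather than on a path shared by two of its cycles, this is still fine, since every arm is common to two of the three cycles of a theta. A minor care point is that multiple parallel threads of length $0 \bmod k$ would be forbidden for simple $G$ only when a thread has length $1$; this is excluded because $d\ge 2$.
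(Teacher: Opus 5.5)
\textbf{There is a genuine gap.} Your counting is fine, but the structural lemma it rests on is false. That lemma says every thread has length $\equiv 0 \pmod d$, hence $w_i\ge d$.

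The theta argument gives $2|P|\equiv 0\pmod k$ only when $P$ is the \emph{entire} common part of two cycles, i.e.\ an entire arm of a theta. A thread that merely lies inside an arm inherits nothing, because the congruence constrains only the total length of that arm. A thread $uv$ of $H$ is a whole arm only if $H-uv$ still contains two internally disjoint $u$--$v$ paths. This fails whenever $uv$ lies in a $2$-edge-cut $\{uv,f\}$ of $H$: every cycle through $uv$ then also uses $f$.

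Here is a concrete counterexample.
\begin{itemize}
\item Take two disjoint copies of $K_4$ minus an edge, with degree-$2$ vertices $a_1,a_2$ and $b_1,b_2$, and add the edges $a_1b_1$ and $a_2b_2$. The resulting $H$ is cubic and $2$-connected.
\item Replace each of the ten edges inside the copies by a path of length $k$. Keep $a_1b_1$ as a single edge, and replace $a_2b_2$ by a path of length $k-1$.
\item Cycles inside a copy have length $3k$ or $4k$. Every other cycle uses both cross threads, so its length is $\equiv 1+(k-1)\equiv 0$.
\end{itemize}
So $G$ is a simple $2$-connected $(0\bmod k)$-cycle-restricted graph with a thread of length $1<d$. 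For odd $k$ even the summed bound $n-p\ge q(d-1)$ fails, since $n-p=11k-12<12(k-1)$.

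\textbf{Repair: count ears instead of threads.}
\begin{itemize}
\item Take an ear decomposition $G=C_0\cup P_1\cup\cdots\cup P_r$, where $r=e(G)-n=q-p$.
\item The ends $x_j\neq y_j$ of $P_j$ lie in the $2$-connected graph $G_{j-1}=C_0\cup P_1\cup\cdots\cup P_{j-1}$. By Menger's theorem, $G_{j-1}$ contains two internally disjoint $x_j$--$y_j$ paths. So $P_j$ is an entire arm of a theta, giving $|P_j|\equiv 0\pmod d$ and hence $|P_j|\ge d$.
\item For $P_1$, the other two arms can be taken to be the two arcs of $C_0$. Both arcs are then positive multiples of $d$, so $|C_0|\ge 2d$.
\item Therefore $n=|C_0|+\sum_j(|P_j|-1)\ge 2d+(d-1)r$. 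This is exactly the inequality $n\ge (d-1)(q-p)+2d$ that your rearrangement needs.
\end{itemize}

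The equality case no longer falls out as ``$p=2$''. Equality forces $|C_0|=2d$ and $|P_j|=d$ for every $j$. Then $P_1$ must join antipodal vertices of $C_0$. For $j\ge 2$, suppose an ear of length $d$ has an end in the interior of an arm of the current $\theta_d$-graph.
\begin{itemize}
\item If both ends lie on one arm, this creates a cycle of length strictly between $d$ and $2d$.
\item If the ends lie on different arms, using a third arm gives two cycles whose lengths differ by $2(d-s)$ with $0<s<d$.
\end{itemize}
Neither is compatible with all cycle lengths being multiples of $k$. Hence every later ear joins the two branch vertices, and $G$ is a $\theta_d$-graph.

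The paper itself only cites this theorem. Its Euler-formula method would not transfer here anyway, since $K_{3,3}$ with every edge replaced by a path of length $k$ is a nonplanar $(0\bmod k)$-cycle-restricted graph.
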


Motivated by Theorem~\ref{thm:Akbari}, we determine the corresponding extremal problem for all nonzero residue classes. 
More precisely, for every pair of integers $1 \le \ell < k$, we determine the maximum number of edges in a $2$-connected $(\ell \bmod k)$-cycle-restricted graph and characterize all graphs attaining equality. 
Together with the result of Akbari et al., this gives a complete determination of the extremal graphs among $2$-connected $(\ell \bmod k)$-cycle-restricted graphs for all $0 \le \ell < k$. 

For each odd integer $k\ge3$, let $\mathcal{G}_k$ be the smallest family of
connected graphs satisfying the following properties:
\begin{itemize}
    \item $C_{k+2}\in\mathcal{G}_k$;
    \item if $G_1,G_2\in\mathcal{G}_k$, then the graph obtained by identifying an edge of $G_1$ with an edge of $G_2$ also belongs to $\mathcal{G}_k$. 
\end{itemize}

\begin{figure}[ht]
\centering

\begin{tikzpicture}[
    scale=0.95,
    every node/.style={circle, fill=black, inner sep=1.4pt}
]

\begin{scope}[xshift=0cm]

\coordinate (a1) at (90:1.4);
\coordinate (a2) at (18:1.4);
\coordinate (a3) at (-54:1.4);
\coordinate (a4) at (-126:1.4);
\coordinate (a5) at (162:1.4);

\draw (a1)--(a2)--(a3)--(a4)--(a5)--cycle;

\foreach \v in {a1,a2,a3,a4,a5}
    \node at (\v) {};

\end{scope}

\begin{scope}[xshift=5cm]

\coordinate (b1) at (0,1.2);
\coordinate (b2) at (0,-1.2);

\coordinate (b3) at (-1.5,-0.6);
\coordinate (b4) at (-1.8,0.6);
\coordinate (b5) at (-0.9,1.5);

\coordinate (b6) at (1.5,-0.6);
\coordinate (b7) at (1.8,0.6);
\coordinate (b8) at (0.9,1.5);

\draw[red, very thick] (b1)--(b2);

\draw (b1)--(b5)--(b4)--(b3)--(b2);

\draw (b1)--(b8)--(b7)--(b6)--(b2);

\foreach \v in {b1,b2,b3,b4,b5,b6,b7,b8}
    \node at (\v) {};

\end{scope}

\begin{scope}[xshift=10.8cm]

\coordinate (c1) at (-0.8,1.2);
\coordinate (c2) at (-0.8,-1.2);

\coordinate (c3) at (-2.2,-0.6);
\coordinate (c4) at (-2.5,0.6);
\coordinate (c5) at (-1.7,1.5);

\coordinate (c6) at (0.6,-0.6);
\coordinate (c7) at (0.9,0.6);
\coordinate (c8) at (0.0,1.5);

\draw[red, very thick] (c1)--(c2);
\draw (c1)--(c5)--(c4)--(c3)--(c2);
\draw (c1)--(c8)--(c7)--(c6)--(c2);

\coordinate (c9)  at (1.9,1.6);
\coordinate (c10) at (2.5,0.7);
\coordinate (c11) at (2.2,-0.4);

\draw[red, very thick] (c8)--(c7);
\draw (c8)--(c9)--(c10)--(c11)--(c7);

\foreach \v in {c1,c2,c3,c4,c5,c6,c7,c8,c9,c10,c11}
    \node at (\v) {};

\end{scope}
\end{tikzpicture}

\caption{
(Left) The cycle $C_5$.
(Center) A graph obtained by identifying an edge of one copy of $C_5$
with an edge of another copy of $C_5$.
(Right) A graph in $\mathcal{G}_3$.
}
\label{fig:G3}
\end{figure}

Figure~\ref{fig:G3} illustrates the construction of graphs in $\mathcal G_3$. 

It follows easily by induction on the construction that every graph $H\in\mathcal G_k$ is $2$-connected and $(2\bmod k)$-cycle-restricted, and satisfies
\[ e(H)=\frac{k+1}{k}(|V(H)|-2)+1. \]

To state our main theorem, define
\[
\ell'=
\begin{cases}
k+\ell, & 0\le\ell\le2,\\
\ell, & 3\le\ell<k.
\end{cases}
\]
Note that $\ell'$ is the smallest possible length of a cycle in a simple
$(\ell \bmod k)$-cycle-restricted graph. 
Our main result is the following.

\begin{theorem} \label{thm:main}
Let $k$ and $\ell$ be integers satisfying $0\le\ell<k$ and $k\ge3$, and let
$G$ be a $2$-connected $(\ell\bmod k)$-cycle-restricted graph on $n$
vertices.
Then the following hold.
\begin{enumerate}
    \item[$(1)$] If $k$ is even and $\ell'$ is odd, then $G$ is a cycle.

    \item[$(2)$] If $\ell'$ is even, then
    \[
        e(G)\le
        \frac{\ell'}{\ell'-2}(n-2).
    \]
    Equality holds if and only if $G$ is a $\theta_{\ell'/2}$-graph.

    \item[$(3)$] If $k$ and $\ell'$ are odd and $\ell \neq 2$, then, unless
    $G$ is a cycle of length $\ell'$,
    \[
        e(G)\le
        \frac{k+\ell'}{k+\ell'-2}(n-2).
    \]
    Equality holds if and only if $G$ is a
    $\theta_{(k+\ell')/2}$-graph.

    \item[$(4)$] If $k$ is odd and $\ell = 2$, then
    \[
        e(G)\le
        \frac{k+1}{k}(n-2)+1.
    \]
    Equality holds if and only if $G\in\mathcal{G}_k$.
\end{enumerate}
\end{theorem}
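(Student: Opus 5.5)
The plan is to reduce $G$ to a weighted multigraph and count edges through ears. If $G$ is not a cycle, suppress all degree-two vertices. The result is a $2$-connected multigraph $H$ with minimum degree at least $3$, whose edges correspond to the threads (maximal paths with internal vertices of degree $2$) of $G$. Since $\ell\neq0$ in cases (1), (3), (4) and the relevant instances of (2), Corollary~\ref{cor:no-subdivision} and the discussion after it show that $G$ has no $K_4$-subdivision, so $G$ is series--parallel. (When $\ell=0$, case (2) with $\ell'=k$ even, or case (3) with $k$ odd, is exactly Theorem~\ref{thm:Akbari}; there $\theta_{k/2}$ and $\theta_{(k+k)/2}=\theta_k$ agree with the statement.)

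The key structural step is that any two branch vertices of degree at least $3$ that are joined by several threads force all those threads to have equal length modulo $k$ (two threads form a cycle). More generally, for any two vertices $u,v$, every $u$--$v$ path lies in a fixed residue class modulo $k$ up to the constraint that sums of two of them are $\equiv\ell$. If three internally disjoint $u$--$v$ paths have lengths $a,b,c$, then $a+b\equiv b+c\equiv a+c\equiv\ell$, which gives $a\equiv b\equiv c$ and $2a\equiv\ell\pmod k$.
\begin{itemize}
\item If $k$ is even and $\ell$ is odd, this congruence is impossible, so no vertex of degree $3$ exists and $G$ is a cycle. This is case (1). Note that $\ell'$ odd with $k$ even forces $\ell$ odd.
\item Otherwise, each thread in a parallel class has length $\equiv\ell/2$ or $(\ell+k)/2$ modulo $k$. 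The minimal admissible thread length is $\ell'/2$ when $\ell'$ is even, and $(k+\ell')/2$ when both are odd.
\item Case (4), $\ell=2$, is special because thread length $1$ is allowed: a single edge can serve as a chord. This explains the gluing of $C_{k+2}$'s along edges and the extra $+1$.
\end{itemize}

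For the counting, I use an ear decomposition $G=C\cup P_1\cup\dots\cup P_t$, or better induction along the series--parallel structure. For a $K_4$-subdivision-free $2$-connected graph one can always find a thread whose removal leaves a $2$-connected graph, or a cycle. Removing a thread of length $m$ deletes $m-1$ vertices and $m$ edges. Hence it suffices to show the following:
\begin{itemize}
\item every such removable thread has length $m\ge d$, where $d=\ell'/2$ in case (2) and $d=(k+\ell')/2$ in case (3);
\item the base cycle has length at least $2d$.
\end{itemize}
Induction then gives $e\le \frac{d}{d-1}(n-2)$ with the right slack: on the cycle $2d/(d-1)\cdot\ldots$, and equality requires the base to be $C_{2d}$ and every ear to have length exactly $d$ with the same endpoints, i.e.\ a $\theta_d$-graph. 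The lower bound $m\ge d$ comes from the parity argument above. The removed thread $P$ and two internally disjoint paths between its ends in the rest form a theta, so $|P|\equiv\ell/2$ modulo $k$ (or the analogue modulo $k$ when $k$ is odd) and $|P|\ge1$. I must also rule out short residues, such as length $\ell/2$ when $\ell'=k+\ell$. In case (3), $\ell$ being odd with $k$ odd forces $2|P|\equiv\ell$, so $|P|\equiv(\ell+k)/2$, and since $\ell\ne2$ no length-$1$ path occurs. For case (4), length-$1$ threads (chords) are possible. I will handle case (4) by induction on the number of chords: cut along a chord $uv$ that separates $G$ into two pieces $G_1,G_2$, each containing $uv$. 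Both pieces are $(2\bmod k)$-restricted, and $e(G)=e(G_1)+e(G_2)-1$, $n=n_1+n_2-2$, which preserves the bound with the $+1$ exactly. Without chords, all ears have length $\ge(k+2)/2$ — more precisely $\equiv(k+2)/2$ — and the bound $\frac{k+2}{k}(n-2)$ is strictly below the case (4) bound unless $G=C_{k+2}$.

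The main obstacle is justifying that a removable thread of length at least $d$ always exists, so that the argument works for every ear rather than only on average. The danger is a graph in which threads between different branch pairs have different residues. I would try to choose an ear decomposition from a leaf of the SP-decomposition tree: in a $K_4$-free $2$-connected graph, some parallel class lies between two vertices $u,v$ with at least two threads and no other structure inside. Combining this with the three-path congruence, applied to the two threads plus an outside path, gives the needed residue. The equality characterization in case (4) also requires care, since one must show that every extremal graph decomposes along chords into $C_{k+2}$'s.
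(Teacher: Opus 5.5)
There is a genuine gap: the case $\ell=2$ is not handled correctly. It breaks your case (2) when $k$ is even, and your case (4) outright.

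Your ear-counting route differs from the paper's (Euler's formula plus Lemma~\ref{lem_2dgon}). For $\ell\notin\{0,2\}$ it can be made to work, because there $2a\equiv\ell\pmod k$ forces $a\ge d$.

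\begin{itemize}
\item Your ``main obstacle'' is not one. In any open ear decomposition, each ear $P_i$ and two internally disjoint paths between its ends in the $2$-connected graph $G_{i-1}$ form a theta. So \emph{every} ear satisfies $2\,l(P_i)\equiv\ell\pmod k$, and no special leaf of a series--parallel decomposition is needed.
\item Your claim that equality forces all ears to have ``the same endpoints'' is asserted, not proved. It does follow by induction from the three-path congruence (using $d\le k$) together with $K_4$-freeness, and you should write that out.
\item In case (2) with $k$ even and $\ell=2$, the congruence $2a\equiv2\pmod k$ allows $a=1$, and such ears really occur. Two copies of $C_{k+2}$ sharing an edge form a $(2\bmod k)$-cycle-restricted graph, with cycle lengths $k+2,k+2,2k+2$. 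The shared edge is a removable thread of length $1<d=k/2+1$. So the short residue you plan to ``rule out'' cannot be ruled out. Your local ear count fails at that ear, and you need a separate splitting argument, or the paper's direct use of girth $\ge k+2$ and Euler's formula.
\end{itemize}

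In case (4) the argument as written is wrong.

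\begin{itemize}
\item For odd $k$, $2a\equiv2\pmod k$ gives $a\equiv1\pmod k$, so every ear that is not a single edge has length at least $k+1$. Your residue $(k+2)/2$ is not even an integer.
\item Your comparison is backwards: $\frac{k+2}{k}(n-2)$ is \emph{larger} than $\frac{k+1}{k}(n-2)+1$ as soon as $n>k+2$.
\item With the correct residue, ear counting gives only the non-strict bound, attained by a base $C_{k+2}$ with all ears of length exactly $k+1$. For example, the rightmost graph of Figure~\ref{fig:G3} is the left $5$-cycle plus two ears of length $4$, with no ear of length $1$. So ``without chords the bound is strict unless $G=C_{k+2}$'' needs a real proof: that an ear of length $k+1$ on a $(k+2)$-cycle must end at adjacent vertices.
\item Cutting ``along a chord that separates'' assumes such a chord exists. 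In that same graph, the edge joining the upper ends of the two red edges joins two degree-$3$ vertices, yet deleting its ends leaves the graph connected.
\end{itemize}

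The missing idea is the paper's argument.
\begin{itemize}
\item If $G$ has no $(k+2)$-cycle, its girth is at least $2k+2$, and Euler's formula gives a strict bound.
\item Otherwise, fix a $(k+2)$-cycle $C$ and a component $D$ of $G-V(C)$. For two attachment vertices $x,y$ of $D$, the two cycles through $D$ have lengths differing by $|l(C[x,y])-l(C[y,x])|$. This difference is odd, at most $k$, and divisible by $k$, so it equals $k$. Hence $xy\in E(C)$, and $D$ attaches at exactly one edge of $C$.
\item Splitting along that edge and inducting gives both the bound and the characterization $G\in\mathcal G_k$, which your proposal leaves open.
\end{itemize}
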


The case $\ell=0$ of Theorem~\ref{thm:main} is precisely Theorem~\ref{thm:Akbari}, while the cases $\ell\neq0$ are proved in the present paper. 
For $\ell \neq 0$, Corollary~\ref{cor:no-subdivision} implies, in particular, that $G$ is planar.
This allows us to apply Euler's formula to obtain the desired upper bounds. 

The remainder of the paper is organized as follows. 
In Section~2, we introduce the notation. 
In Section~3, we prove the structural lemma for $2d$-angulations, that is, plane graphs in which every face boundary is a cycle of length $2d$.  
In Section~4, we prove Theorem~\ref{thm:main}. 

\section{Preliminaries}
Unless otherwise stated, all graphs in this paper are simple. 
Let $G$ be a graph. 
For a path $P$ or a cycle $C$ in $G$, we denote by $l(P)$ and $l(C)$ the length of $P$ and $C$, respectively. 
If $l(C) = \ell$, then $C$ is called an \emph{$\ell$-cycle}. If $l(C)$ is even, then $C$ is called an \emph{even cycle}. 
Let $\mathcal{L}(G)$ denote the set of all integers that occur as the lengths of cycles contained in $G$. 
Let $\ell_1(G)$ denote the minimum element of $\mathcal{L}(G)$, and define
\[
\ell_2(G):=
\begin{cases}
\infty & \text{if } \mathcal{L}(G)=\{\ell_1(G)\},\\
\min\bigl(\mathcal{L}(G)\setminus \{\ell_1(G)\}\bigr) & \text{otherwise}.
\end{cases}
\]
We will also use this notation in the proofs of Theorem~\ref{thm:main} $(2)$, $(3)$ and $(4)$. 
 
If a path $P$ has endvertices $x$ and $y$, we write $\End(P) = \{x,y\}$. 
For an oriented cycle $C$ and vertices $x,y\in V(C)$, let $C[x,y]$ denote the $(x,y)$-path on $C$ directed from $x$ to $y$ along the orientation of $C$. 

For two subgraphs $H_1, H_2$ of a graph $G$, 
we define $H_1 \triangle H_2$ to be the subgraph of $G$ with edge set 
$(E(H_1) \cup E(H_2)) \setminus (E(H_1) \cap E(H_2))$, where isolated vertices are omitted.  
In the proof of the Main Theorem, we often use the fact that, 
for two cycles $C_1$ and $C_2$ of $G$, 
the edge set $E(C_1 \triangle C_2)$ can be decomposed into edge-disjoint cycles. 

Throughout this paper, by abuse of notation, for a face $F$ of a plane graph, 
we also write $F$ for the graph obtained from the boundary of $F$. 
Let $G$ be a plane graph. 
For a face $F$ of $G$, if $F$ is a $d$-cycle, then $F$ is called a \emph{$d$-face}. 
The plane graph $G$ is called a \emph{$d$-angulation} if every face of $G$ is a $d$-face. 

\section{Key Lemma}
In this section, we prove the following lemma, which plays a key role in the proof of the main theorem. 
This lemma is a generalization of Lemma 3.1 in \cite{Ueoro2026}. 
\begin{lemma} \label{lem_2dgon}
Let $d \ge  2$ be an integer, and let $H$ be a plane $2d$-angulation. 
Suppose that the girth of $H$ is at least $2d$ and that $H$ contains no even cycle $C$ satisfying $2d+2 \le l(C) < 10d/3$. 
Then $H$ is a $\theta_d$-graph. 
\end{lemma}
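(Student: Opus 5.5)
The plan is to prove the lemma by induction on the number of faces of $H$, after first recording a local restriction on how two faces can meet. Since every face of $H$ is a $2d$-cycle, $H$ is $2$-connected. Every subgraph of $H$ inherits the two hypotheses (girth at least $2d$, and no even cycle with length in $[2d+2,10d/3)$).

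\textbf{Step 1 (segments).} If $H$ is a cycle, it has exactly two faces, and $H=C_{2d}$ is a $\theta_d$-graph. This is the base case. Otherwise, let $H'$ be the plane multigraph obtained by suppressing all vertices of degree $2$. Every vertex of $H'$ has degree at least $3$. Each edge of $H'$ corresponds to a \emph{segment} of $H$, that is, a maximal path whose internal vertices have degree $2$. Each segment lies on the boundaries of exactly two distinct faces. Suppose two faces $F_1,F_2$ share a segment $P$ and $F_1\cap F_2$ is exactly $P$. Then $F_1\triangle F_2$ is an even cycle of length $4d-2l(P)$. The girth and forbidden-range hypotheses then force either $l(P)=d$ or $l(P)\le d/3$. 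I will use this mainly to control the faces that have few segments.

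\textbf{Step 2 (a face with two segments).} Apply Euler's formula to $H'$. If every face of $H'$ had at least $3$ sides, then $2E'\ge 3F'$. Since every vertex has degree at least $3$, also $2E'\ge 3V'$. Together these give $V'-E'+F'\le 0$, a contradiction. Faces of $H'$ with only one side would be loops, which cannot occur because $F$ is a cycle in $H$. Hence some face $F$ of $H$ consists of exactly two segments $P,Q$ joining two branch vertices $x,y$. Each segment lies on $F$ and on one other face, so each has length at most $d$. Indeed, if $l(P)>d$, let $G$ be the other face containing $P$. Then $F\triangle G$ contains a cycle of length less than $2d$ (it is formed by $Q$ and the rest of $G$), contradicting the girth. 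Since $l(P)+l(Q)=2d$, we get $l(P)=l(Q)=d$.

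\textbf{Step 3 (deletion and induction).} Let $G\neq F$ be the other face containing $P$. Deleting the interior of $P$ merges $F$ and $G$ into one face bounded by $F\triangle G$. This boundary consists of $Q$ together with $G-P$, a path of length $d$ from $x$ to $y$. So the new face is a cycle of length $2d$. The resulting graph $H_0$ is therefore a plane $2d$-angulation with one fewer face, and it still satisfies the hypotheses. By induction, $H_0$ is a $\theta_d$-graph with poles $u,v$. Then $H$ is obtained from $H_0$ by adding a path $P$ of length $d$ inside one face of $H_0$. That face is bounded by two of the $u$--$v$ paths, $Q_1$ and $Q_2$, and $P$ joins two vertices $x,y$ of this face that split its boundary into two arcs of length $d$. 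It remains to show $\{x,y\}=\{u,v\}$, so that $H$ is again a $\theta_d$-graph. If $H_0$ is just a $2d$-cycle, then $H$ is a $\theta_d$-graph with poles $x,y$, and we are done.

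\textbf{Step 4 (the main obstacle).} Otherwise $H_0$ has a third $u$--$v$ path $Q_3$. Suppose $x$ lies in the interior of $Q_1$ at distance $a$ from $u$, with $0<a<d$. Then $y$ lies on $Q_2$ at distance $a$ from $v$. Consider two cycles through $P$ and $Q_3$. The first goes $x\to u$ along $Q_1$, $u\to v$ along $Q_3$, $v\to y$ along $Q_2$, then back along $P$; its length is $2d+2a$. The second goes $x\to v$ along $Q_1$, $v\to u$ along $Q_3$, $u\to y$ along $Q_2$, then back along $P$; its length is $4d-2a$. Both lengths are even. If $a<2d/3$, then $2d+2\le 2d+2a<10d/3$, which is forbidden. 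If $a\ge 2d/3$, then $a>d/3$ gives $4d-2a<10d/3$, and $a\le d-1$ gives $4d-2a\ge 2d+2$, again forbidden. This contradiction shows $x,y$ are the poles $u,v$, which completes the induction.

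The delicate points I expect to need care are these. First, checking that the deletion in Step 3 genuinely produces a $2d$-angulation, in particular that $G-P$ is a path and that $F\cap G$ is just $P$. Second, checking in Step 2 that faces of $H'$ have well-defined side counts; this uses that face boundaries in $H$ are cycles.
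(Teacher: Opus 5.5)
\textbf{Gap in Step 2.} The Euler computation is wrong. From $2E'\ge 3F'$ and $2E'\ge 3V'$ you only get $V'-E'+F'\le \frac{2}{3}E'-E'+\frac{2}{3}E'=\frac{1}{3}E'$, which is perfectly compatible with Euler's formula. The cube is a counterexample: every vertex has degree $3$, every face has $4$ sides, and $8-12+6=2$. Any subdivision of the cube is a plane graph whose faces are cycles and that has no face bounded by two segments. So the two-segment face $F$, on which Steps 3 and 4 rest, is never established.

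Notice that Step 2 as written does not use the girth or the forbidden window at all, so it could not have worked. This is exactly where the hypotheses must enter. Even your Step 1 dichotomy does not directly give a two-segment face, since a face may consist of one segment of length $d$ plus several segments of length at most $d/3$.

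\textbf{The repair.} What you can prove, and what suffices, is the existence of a shared path of length $d$; this is the paper's route.

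For distinct faces $F,G$ sharing an edge, $F\triangle G$ has $4d-2e(F\cap G)\le 4d-2$ edges. By the girth it is a single cycle, and then the forbidden window forces $e(F\cap G)=d$ or $e(F\cap G)\le d/3$.

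Suppose the second alternative held for every such pair. Each edge of a face $F$ lies on exactly one other face, so the edges of $F$ are partitioned among the sets $E(F\cap G)$. Hence $F$ meets at least $6$ other faces along its boundary, and so it has at least $6$ sides in $H'$. Now $2E'\ge 6F'$ and $2E'\ge 3V'$ really do give $V'-E'+F'\le 0$, a contradiction. Equivalently, the simplified dual would have minimum degree at least $6$, which is how the paper phrases it.

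So some faces $F,G$ share a path $P$ of length $d$. Deleting the interior of $P$ merges them into a face bounded by the $2d$-cycle $F\triangle G$. Your Steps 3 and 4 then go through verbatim with $x,y$ the ends of $P$; neither actually needs $F$ to have only two segments.

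With this repair, your Step 4 is a valid and rather direct alternative to the paper's identification of the poles. You use the cycles of lengths $2d+2a$ and $4d-2a$ through a third path $Q_3$, one of which always lands in the forbidden window. The paper instead analyzes how the neighbouring face $P_2\cup P_3$ meets $C_1$ and $C_2$.
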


\begin{proof}
Let $f_H$ denote the number of faces of $H$.
We proceed by induction on $f_H$.

If $f_H=2$ or $3$, then the conclusion follows immediately from the assumption that $H$ is a $2d$-angulation with girth at least $2d$. 

Now assume that $f_H \ge 4$. 
Let $C_1,C_2$ be distinct $2d$-faces of $H$ such that $e(C_1 \cap C_2) \ge 1$. 
Since every face of $H$ is an even cycle, $H$ is bipartite. 
Hence every cycle of $H$ is even. 
The edge set $E(C_1 \triangle C_2)$ can be decomposed into edge-disjoint cycles. Moreover, $e(C_1 \triangle C_2) \le 4d-2$. Since the girth of $H$ is at least $2d$, this decomposition consists of exactly one cycle. Thus, $C_1 \triangle C_2$ is a cycle, and $C_1 \cap C_2$ is a path. 
It follows that $e(C_1 \triangle C_2) = 4d-2e(C_1 \cap C_2)$. 
Since $H$ is bipartite and contains no even cycle $C$ satisfying $2d+2 \le l(C) < 10d/3$, 
\[ 1 \le e(C_1 \cap C_2) \le \frac{d}{3} \quad \text{or} \quad e(C_1 \cap C_2) = d.\] 


First, suppose that there exist $2d$-faces $C_1,C_2$ of $H$ such that $e(C_1 \cap C_2)=d$. 
Let $H'$ be the graph obtained from $H$ by deleting the internal vertices of $C_1 \cap C_2$. 
Since $H'$ is a subgraph of $H$, every cycle in $H'$ also satisfies the assumptions. 
Moreover, $H'$ is a $2d$-angulation with $f_H-1$ faces. 
Hence, by the induction hypothesis, $H'$ is a $\theta_d$-graph. 

Let $m=f_H-1$ (then $m \ge 3$). 
Then there exist vertices $u,v \in V(H')$ and internally disjoint $(u,v)$-paths $P_1, P_2, \ldots, P_m$ of length $d$ such that
\[
H'=P_1\cup P_2\cup \cdots \cup P_m,
\]
and every face of $H'$ is one of
\[
P_1\cup P_2,\,
P_2\cup P_3,\,
\ldots,\,
P_{m-1}\cup P_m,\,
P_m\cup P_1.
\]
Let $\End(C_1 \cap C_2)=\{x, y\}$. 
By symmetry of $H'$, we may assume that $C_1 \triangle C_2 = P_1 \cup P_2$. 
Let $C' = P_2 \cup P_3$. $C'$ is also a $2d$-face of $H$. 
By the discussion above, for each $i \in \{1, 2\}$, 
\[ 0 \le e(C_i \cap C') \le \frac{d}{3} \quad \text{or} \quad e(C_i \cap C') = d.\] 
On the other hand, $E(C_1\cap C')\cup E(C_2\cap C')=E(P_2)$, 
and hence
\[
e(C_1\cap C')+e(C_2\cap C')=d.
\]
Then we obtain that 
\[
(e(C_1\cap C'),e(C_2\cap C')) \in \{(0,d), (d,0)\}.
\]
Therefore, $\{u,v\}=\{x,y\}$, and thus $H$ is a $\theta_d$-graph.

It remains to consider the case where $e(C_1\cap C_2) \le d/3$ for every pair of distinct $2d$-faces $C_1,C_2$ of $H$.
Let $H^*$ be the dual graph of $H$, and let $\overline{H^*}$ be the simple plane graph obtained from $H^*$ by replacing each set of parallel edges by a single edge. 
Since each face of $H$ has length $2d$, every vertex of $H^*$ has degree $2d$. 
Moreover, the multiplicity of any pair of adjacent vertices of $H^*$ is at most $d/3$. 
Therefore, $\overline{H^*}$ is a simple plane graph with minimum degree at least $6$, contradicting Euler's formula. 
Therefore, $H$ is a $\theta_d$-graph. 
\end{proof}

\section{Proof of Main Theorem}
In this section, we prove Theorem~\ref{thm:main}.  
By Theorem~\ref{thm:Akbari}, the case $\ell=0$ is already known, so it suffices to prove the following theorem. 
For $\ell \neq 0$, we also prove planarity, which will be used in the proof. 

\begin{theorem} \label{thm_re_main}
Let $k$ and $\ell$ be integers satisfying $1 \le \ell < k$ and $k \ge 3$, and let $G$ be a $2$-connected $(\ell \bmod k)$-cycle-restricted graph on $n$ vertices. 
Define
\[
\ell' =
\begin{cases}
k+\ell & \text{if} \hspace{0.75em}  \ell \in \{1,2 \}, \\
\ell & \text{otherwise}.
\end{cases}
\]
\begin{enumerate}
\item[$(1)$] $G$ is planar. 
\item[$(2)$] If $k$ is even and $\ell'$ is odd, then $G$ is a cycle.
\item[$(3)$] If $\ell'$ is even, then
\[
e(G) \le \frac{\ell'}{\ell'-2}(n-2).
\]
Equality holds if and only if $G$ is a $\theta_{\ell'/2}$-graph.
\item[$(4)$] If $k$ and $\ell'$ are odd and $\ell \ne 2$, then, unless $G$ is a cycle of length $\ell'$, 
\[
e(G) \le \frac{k+\ell'}{k+\ell'-2}(n-2).
\]
Equality holds if and only if $G$ is a $\theta_{(k+\ell')/2}$-graph.
\item[$(5)$] If $k$ is odd and $\ell = 2$, then
\[
e(G) \le \frac{k+1}{k}(n-2) + 1.
\]
Equality holds if and only if $G \in \mathcal G_k$. 
\end{enumerate}
\end{theorem}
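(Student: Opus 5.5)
Planarity (1) comes first and is immediate. Corollary~\ref{cor:no-subdivision} with $H=K_4$ shows that $G$ has no $K_4$-subdivision. Hence $G$ is series--parallel, so it is planar, and we fix a plane embedding. Since $G$ is $2$-connected, every face boundary is a cycle, and its length therefore lies in $\mathcal L(G)\subseteq\{\ell',\ell'+k,\ell'+2k,\dots\}$. Note that $\ell'\ge3$ is the least admissible length, so $\ell_1(G)\ge\ell'$.

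The basic tool is a \emph{two-face lemma}. If $C_1,C_2$ are faces sharing an edge, then (as in the proof of Lemma~\ref{lem_2dgon}) $C_1\cap C_2$ is a path when $C_1\triangle C_2$ is a single cycle. In that case
\[
l(C_1\triangle C_2)=l(C_1)+l(C_2)-2e(C_1\cap C_2)\equiv \ell \pmod k,
\]
which forces $2\ell-2e(C_1\cap C_2)\equiv\ell$, i.e.\ $2e(C_1\cap C_2)\equiv \ell\pmod k$.

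For (2), take $k$ even and $\ell'$ odd, so $\ell$ is odd. Then $2e\equiv\ell \pmod k$ is impossible, since $2e$ is even and so is $k$. Whenever $C_1\triangle C_2$ splits into several cycles, lengths add, so the total is congruent to $m\ell$ for some $m\ge1$ cycles. This gives the parity constraint $2\ell-2e\equiv m\ell\pmod 2$, i.e.\ $m$ is even. I would avoid this complication by using ears instead. If $G$ is not a cycle, take an ear decomposition: a cycle $C$ plus an ear $P$ with ends $x,y$ splitting $C$ into $Q_1,Q_2$. The three cycles $Q_1\cup Q_2$, $Q_1\cup P$ and $Q_2\cup P$ have total length $2(l(P)+l(Q_1)+l(Q_2))\equiv 3\ell\pmod k$. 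The left side is even and $k$ is even, so $3\ell$ is even, a contradiction.

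This theta argument also gives the general constraint: for any theta with path lengths $a,b,c$ we have $a+b\equiv b+c\equiv a+c\equiv\ell$, hence $2a\equiv 2b\equiv2c\equiv\ell\pmod k$. I would use it throughout.

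For the bounds (3)--(5), Euler's formula with face lengths $\ge g$ gives $e\le \frac{g}{g-2}(n-2)$. In case (3), with $\ell'$ even and every face of length $\ge\ell'$, this gives the bound at once. Equality makes $G$ an $\ell'$-angulation with girth $\ge\ell'$. To invoke Lemma~\ref{lem_2dgon} with $d=\ell'/2$ I need to exclude even cycles of length in $[\ell'+2,\,5\ell'/3)$. Every cycle length is $\ell'+jk$, so it suffices that $\ell'+k\ge 5\ell'/3$, i.e.\ $k\ge 2\ell'/3$. This holds when $\ell'=\ell<k$. When $\ell'=k+\ell$ with $\ell\in\{1,2\}$ it needs $k\ge 2\ell$, which is fine since $k\ge3$ (the case $\ell=1$ with $k$ odd gives $\ell'$ even). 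Then $G$ is a $\theta_{\ell'/2}$-graph, and conversely such graphs attain equality.

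For (4), with $k,\ell'$ odd, the theta constraint gives $2a\equiv\ell\pmod k$, so each ear has a determined residue. Faces have length $\ell'$ or $\ge\ell'+k$. The obstacle is faces of length exactly $\ell'$. Two adjacent $\ell'$-faces would share a path of length $e$ with $2\ell'-2e\ge\ell'$ and $2\ell'-2e\equiv\ell'$, i.e.\ $2e\equiv\ell'$. With $k$ odd and $e<\ell'/2$ this is impossible, since $2e\equiv \ell'\pmod k$ with $0<2e<\ell'<k+\ell'$ fails unless $2e=\ell'-k<0$ or $2e=\ell'+k$. So I would show that $\ell'$-faces are pairwise non-adjacent, or more strongly that at most one exists unless $G=C_{\ell'}$. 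Then a discharging or weighted Euler count, giving each small face's deficit to its neighbours of length $\ge \ell'+k+1$ (the lengths $\ell'+k$ being even), yields the bound with $g=k+\ell'$. Equality forces a $(k+\ell')$-angulation, and Lemma~\ref{lem_2dgon} applies as before.

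Case (5) ($\ell=2$, $\ell'=k+2$ odd) is the hardest, and it is where I expect the main obstacle. Adjacent $(k+2)$-faces can share exactly one edge ($2e\equiv2$ gives $e=1$), which produces $\mathcal G_k$. I would prove it by induction on the number of faces. If two adjacent faces share a path of length $e\ge2$, we have $2e\equiv 2\pmod k$, hence $e\equiv1\pmod k$ and $e\ge k+1$; such faces are long, and deleting a suitable ear reduces $n$ and $e$ in a favorable ratio. Otherwise, a leaf-like $(k+2)$-face attached along a single edge is removed, decreasing $n$ by $k$ and $e$ by $k+1$, which exactly preserves $\frac{k+1}{k}(n-2)+1$. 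Finding such an ear when the weak dual is not a tree is the delicate counting step. There I would combine Euler's formula with the face-length residues to show that the outer structure must be tree-like.
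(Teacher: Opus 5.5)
Parts (1)--(3) are fine and match the paper. For (2), your parity count over the three cycles of a theta is equivalent to the paper's pigeonhole argument. In (3), the case $\ell=2$ needs $k\ge 4$, which holds because $\ell'=k+2$ even forces $k$ even. The proposal has genuine gaps in (4) and (5): in neither part is the bound actually proved.

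\textbf{Part (4).} You plan to show that $\ell'$-faces are pairwise non-adjacent, or that at most one exists, and then appeal to an unspecified discharging. Neither claim is proved. Your face-pair computation only applies when $C_1\triangle C_2$ is a single cycle, which you do not guarantee.

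Worse, even a single $\ell'$-face defeats the Euler count. With one face of length $\ell'$ and all others of length at least $g=k+\ell'$, you only get $e\le \frac{g}{g-2}(n-2)+\frac{k}{g-2}$. You never say which faces absorb the deficit, or why they have enough surplus.

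The missing fact is that $G$ contains \emph{no} $\ell'$-cycle unless $G=C_{\ell'}$. It follows at once from your own theta constraint:
\begin{itemize}
\item Since $k$ is odd, $2a\equiv 2b\pmod k$ gives $a\equiv b\pmod k$.
\item Take an $\ell'$-cycle $C$ and an ear $P$, which exists since $G\ne C$ is $2$-connected.
\item If the ends of $P$ split $C$ into arcs of lengths $a,b$, then $a+b=\ell'=\ell<k$, so $|a-b|<k$ forces $a=b$.
\item Then $\ell'=2a$ is even, a contradiction.
\end{itemize}
This is the paper's argument: $P\cup C[x,y]$ and $P\cup C[y,x]$ have lengths of different parity that differ by less than $k$.

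Hence the girth is at least $k+\ell'$, and Euler's formula gives the bound directly, with equality only for a $(k+\ell')$-angulation. Before invoking Lemma~\ref{lem_2dgon}, do not just say ``as before''. You need that the angulation is bipartite, so that the next possible length is $\ell'+3k$, and that $\ell'+3k-\frac{5}{3}(k+\ell')=\frac{2}{3}(2k-\ell')>0$. Without bipartiteness, the length $\ell'+2k$ would not clear the threshold $\frac{5}{3}(k+\ell')$ in general.

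\textbf{Part (5).} This is a plan rather than a proof. The existence of a ``leaf-like'' face, the ``favorable ratio'' when faces share $e\ge k+1$ edges, and the tree-like structure are asserted, not shown. Such sharing does occur inside $\mathcal G_k$. Glue two copies of $C_{k+2}$ along an edge and draw the common edge inside: the outer face has length $2k+2$ and shares $k+1$ edges with each inner face. So a face-based induction has real work to do, and you have not done it.

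The missing idea is to argue with a cycle rather than with faces:
\begin{itemize}
\item If $G$ has no $(k+2)$-cycle, its girth is at least $2k+2$. Euler then gives $e\le\frac{k+1}{k}(n-2)$, strictly below the bound.
\item Otherwise fix a $(k+2)$-cycle $C$; it is chordless by the girth. Let $D$ be a component of $G-V(C)$ and let $x,y$ be two of its attachment vertices on $C$.
\item Routing through $D$ gives two cycles whose lengths differ by $|a-b|$, where $a,b$ are the arc lengths of $C$ between $x$ and $y$.
\item This difference is nonzero since $a+b=k+2$ is odd, it is at most $k$, and it is divisible by $k$. So $\{a,b\}=\{1,k+1\}$ and $xy\in E(C)$.
\item Since $k+2\ge 5$, $D$ attaches at exactly two adjacent vertices.
\end{itemize}
Thus $G$ is the union of $G_1=G-V(D)$ and $G_2=G[V(D)\cup\{x,y\}]$ with $G_1\cap G_2=xy$. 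Both pieces are $2$-connected and $(2\bmod k)$-cycle-restricted. Induction on $n$, using $e(G)=e(G_1)+e(G_2)-1$ and $n=n_1+n_2-2$, gives the bound. Equality forces both $G_i\in\mathcal G_k$, and hence $G\in\mathcal G_k$.
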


\begin{proof}

$(1)$
Although the planarity of $G$ follows immediately from Corollary~\ref{cor:no-subdivision}, we give a direct proof. 
It suffices to show that $G$ contains neither a subdivision of $K_5$ nor a subdivision of $K_{3,3}$. 

Let $H$ be a subdivision of $K_5$ or $K_{3,3}$. 
Suppose, for a contradiction, that $H$ is an $(\ell \bmod k)$-cycle-restricted graph. 
All congruences below are taken modulo $k$. 

First, suppose that $H$ is a subdivision of $K_5$. 
Let $v_1,v_2,v_3,v_4,v_5$ be the vertices of $K_5$, and for each edge $v_iv_j$ of $K_5$, let $P_{i,j}$ denote the corresponding path in $H$. 

By assumption, 
\[
e(H)=\sum_{1\le i<j\le 5} l(P_{i,j}). 
\]
Moreover,
\[
e(H)
=
l(P_{1,2}\cup P_{2,3}\cup P_{3,1})
+
l(P_{1,4}\cup P_{4,5}\cup P_{5,1})
+
l(P_{2,4}\cup P_{4,3}\cup P_{3,5}\cup P_{5,2})
\equiv 3\ell.
\]
On the other hand,
\[
e(H)
=
l(P_{1,2}\cup P_{2,3}\cup P_{3,4}\cup P_{4,5}\cup P_{5,1})
+
l(P_{1,3}\cup P_{3,5}\cup P_{5,2}\cup P_{2,4}\cup P_{4,1})
\equiv 2\ell.
\]
Hence $3\ell \equiv 2\ell$, and therefore $\ell \equiv 0$, a contradiction.

Finally, suppose that $H$ is a subdivision of $K_{3,3}$.
Let $\{x_1,x_2,x_3\}$ and $\{y_1,y_2,y_3\}$ be the bipartition of $K_{3,3}$.
For each edge $x_iy_j$, let $Q_{i,j}$ denote the corresponding path in $H$.
Set $m:=e(H)+l(Q_{1,1})+l(Q_{2,2})+l(Q_{3,3})$.
Then
\[
m
=
l(Q_{1,1}\cup Q_{1,2}\cup Q_{2,1}\cup Q_{2,2})
+
l(Q_{2,2}\cup Q_{2,3}\cup Q_{3,2}\cup Q_{3,3})
+
l(Q_{3,3}\cup Q_{3,1}\cup Q_{1,3}\cup Q_{1,1})
\equiv 3\ell.
\]
On the other hand,
\[
m
=
l(Q_{1,1}\cup Q_{2,1}\cup Q_{2,2}\cup Q_{3,2}\cup Q_{3,3}\cup Q_{1,3})
+
l(Q_{1,1}\cup Q_{3,1}\cup Q_{3,3}\cup Q_{2,3}\cup Q_{2,2}\cup Q_{1,2})
\equiv 2\ell.
\]
Thus $3\ell\equiv2\ell$, and hence $\ell\equiv0$, a contradiction.

Therefore, $G$ contains neither a subdivision of $K_5$ nor a subdivision of $K_{3,3}$. Hence $G$ is planar by Kuratowski's theorem. 

$(2)$
Since $k$ is even and $\ell$ is odd,  every cycle of $G$ has odd length. 
More generally, we show that every $2$-connected graph $H$ containing no even cycle is a cycle. 
Suppose, to the contrary, that $H$ is not a cycle. 
Since $H$ is $2$-connected and is not a cycle, $H$ contains three internally disjoint paths $P_1,P_2,P_3$ with the same pair of endvertices. 
By the pigeonhole principle, there exist distinct $i,j\in\{1,2,3\}$ such that $ l(P_i)\equiv l(P_j)\pmod 2 $. Hence, $P_i\cup P_j$ is an even cycle, a contradiction. 
Therefore, $H$ is a cycle. 

$(3)$
In the remainder of the proof, we fix a plane embedding of $G$ and regard $G$ as a plane graph. 

Since the girth of $G$ is at least $\ell'$, by the planarity of $G$ and Euler's formula,
\[
e(G)\le \frac{\ell'}{\ell'-2}(n-2),
\]
with equality if and only if $G$ is an $\ell'$-angulation. 

Therefore, it suffices to show that if $G$ is an $\ell'$-angulation, then $G$ is a $\theta_{\ell'/2}$-graph.
Let $\ell'=2d$. 
Since $\ell'$ is even and at least $3$, we have $d \in \mathbb{Z}_{\ge 2}$. 
By the assumption on $G$, we have $\ell_1(G)=\ell'=2d$. 
Moreover, $\ell_2(G) \ge k+\ell'$. 
By the definition of $\ell'$, we have $\ell' \le k+2$, and hence $k\ge \ell'-2=2d-2$. 
Since $k\ge3$, it follows that $k \ge \max\{3,2d-2\} \ge 4d/3$. 
Therefore, 
\[ \ell_2(G)\ge k+\ell' \ge \frac{4d}{3}+2d =\frac{10d}{3}. \]
Consequently, 
\[ \left[ 2d+2,\,\frac{10d}{3} \right) \cap \mathcal{L}(G) = \emptyset. \]
Thus, by Lemma~\ref{lem_2dgon}, $G$ is a $\theta_d$-graph.
Since $d=\ell'/2$, the proof is complete.

$(4)$
First, observe that $\ell \ge 3$ and $\ell'=\ell<k$. 
Indeed, if $\ell=1$, then $\ell'=k+1$, contrary to the assumption that both $k$ and $\ell'$ are odd. 
Thus, $\ell \ge 3$ and $\ell'=\ell<k$.

Suppose first that $G$ contains an $\ell$-cycle $C$.
Since $G$ is not itself an $\ell$-cycle, there exists an ear $P$ of $C$.
Let $x,y$ be the endvertices of $P$.
Then $x,y\in V(C)$.
Fix an orientation of $C$, and define
\[
C_1:=P\cup C[x,y], \qquad
C_2:=P\cup C[y,x].
\]
Since $C$ is an odd cycle, the two paths $C[x,y]$ and $C[y,x]$ have different parities. 
Hence $l(C_1)$ and $l(C_2)$ also have different parities, and in particular, $l(C_1) \ne l(C_2)$. 
Moreover, 
\[
0<
\bigl|l(C_1)-l(C_2)\bigr|
=
\bigl|l(C[x,y])-l(C[y,x])\bigr|
\le l(C)-2
=\ell-2
<k.
\]
On the other hand, since both $C_1$ and $C_2$ are cycles of $G$, we have
\[
l(C_1)\equiv l(C_2)\equiv \ell \pmod{k}. 
\]
Thus, $l(C_1)-l(C_2)$ is a multiple of $k$, contradicting $0 < \bigl|l(C_1)-l(C_2)\bigr| < k$. 
Therefore, $G$ contains no $\ell$-cycle, and hence the girth of $G$ is at least $k+\ell$. 

By the planarity of $G$ and Euler's formula, 
\[
e(G)\le \frac{k+\ell}{k+\ell-2}(n-2), 
\]
with equality if and only if $G$ is a $(k+\ell)$-angulation. 

Therefore, it suffices to show that if $G$ is a $(k+\ell)$-angulation, then $G$ is a $\theta_{(k+\ell)/2}$-graph. 
Let $k+\ell=2d$. 
Since both $k$ and $\ell$ are odd and at least $3$, we have $d \in \mathbb{Z}_{\ge 3}$.
Since $G$ is a $(k+\ell)$-angulation, $\ell_1(G)=k+\ell=2d$. 
Moreover, since $k+\ell$ is even, $G$ is bipartite. 
Hence, since $k$ is odd, we have $\ell_2(G) \ge 3k+\ell$. 
By $d=(k+\ell)/2$ and $\ell < k$, we have
\[ \ell_2(G)-\frac{10d}{3} \ge 3k+\ell -\frac{10d}{3} = 3k+\ell -\frac{5(k+\ell)}{3} = \frac{2(2k-\ell)}{3} >0. \]
Consequently, 
\[ [2d+2,\,10d/3)\cap\mathcal{L}(G)=\emptyset. \] 
Therefore, by Lemma~\ref{lem_2dgon}, $G$ is a $\theta_d$-graph. 
Since $d=(k+\ell)/2$, the proof is complete. 

$(5)$
Since the desired equality is already known when $G \in \mathcal{G}_k$, it suffices to prove the converse direction.
More precisely, we show by induction on $n=|V(G)|$ that every $2$-connected $(2 \bmod k)$-cycle-restricted graph $G$ satisfies
\[
e(G)<\frac{k+1}{k}(n-2)+1,
\]
unless $G \in \mathcal{G}_k$.

Suppose first that $G$ contains no $(k+2)$-cycle. 
Then $\ell_1(G)\ge 2k+2$. 
By the planarity of $G$ and Euler's formula,
\[
e(G)
\le
\frac{2k+2}{2k}(n-2)
=
\frac{k+1}{k}(n-2)
<
\frac{k+1}{k}(n-2)+1.
\]
Thus, the assertion holds in this case. 

Hence, we may assume that $G$ contains a $(k+2)$-cycle $C$.
In particular, $n\ge k+2$.
Suppose, to the contrary, that
\[
e(G)\ge \frac{k+1}{k}(n-2)+1
\qquad\text{and}\qquad
G\notin\mathcal{G}_k.
\]
If $G=C$, then $G\in\mathcal{G}_k$ by the definition of $\mathcal{G}_k$, a contradiction. 
Thus, $G\neq C$. 

Since the girth of $G$ is at least $k+2$, the cycle $C$ has no chord. 
Therefore, $V(G) \setminus V(C) \neq \emptyset$. 
Let $D$ be a connected component of $G-V(C)$, and set $A:=N_G(D) \cap V(C)$. 
Since $G$ is $2$-connected, we have $|A| \ge 2$. 
Let $x,y$ be arbitrary distinct vertices of $A$. 
Since $D$ is connected, there exists an $(x,y)$-path $P$ whose internal vertices belong to $D$. 
Fix an orientation of $C$, and define 
\[ C_1 := P \cup C[x,y], \qquad C_2 := P\cup C[y,x]. \]
Since $l(C)=k+2$ is odd, $l(C[x,y])$ and $l(C[y,x])$ have different parities. 
Hence, $l(C_1) \ne l(C_2)$. 
Moreover, 
\[ 0< \bigl|l(C_1)-l(C_2)\bigr| = \bigl|l(C[x,y])-l(C[y,x])\bigr| \le l(C)-2 = k. \] 
On the other hand, since both $C_1$ and $C_2$ are $(2\bmod k)$-cycles, $l(C_1)\equiv l(C_2)\pmod{k}$.  Therefore, $\bigl|l(C_1)-l(C_2)\bigr|=k$. 
Since $l(C[x,y])+l(C[y,x])=k+2$, it follows that $ \min\{l(C[x,y]),l(C[y,x])\}=1$. 
Thus, $xy\in E(C)$. 
Since $x$ and $y$ were arbitrary distinct vertices of $A$, every two distinct vertices of $A$ are adjacent on $C$. 
As $l(C) = k+2 \ge 5$, this is possible only if $|A|=2$. Write $A=\{x,y\}$. 

Define 
\[ G_1:=G-V(D), \qquad G_2:=G[V(D)\cup\{x,y\}]. \] 
Then 
\[ G_1\cup G_2=G \qquad\text{and}\qquad G_1\cap G_2=xy. \] 
Hence, both $G_1$ and $G_2$ are $2$-connected. 
Moreover, since they are subgraphs of $G$, both are $(2\bmod k)$-cycle-restricted graphs. 

For $i\in\{1,2\}$, let $n_i:=|V(G_i)|$. 
Since $V(C) \subseteq V(G_1)$, $n_1 \ge k+2 \ge 3$, and $n_2 = |V(D)|+2 \ge 3$. 
Furthermore, $n_1+n_2=n+2$. 
Since $n_1,n_2 \ge 3$, we obtain $n_1,n_2\le n-1$. 
Hence, by the induction hypothesis, for each $i\in\{1,2\}$, either 
\[
e(G_i) < \frac{k+1}{k}(n_i-2)+1
\quad\text{or}\quad
G_i \in \mathcal{G}_k.
\]
If $G_i \in \mathcal{G}_k$, then $e(G_i) = (k+1)(n_i-2)/k+1$. 
Thus, for each $i\in\{1,2\}$, $e(G_i) \le (k+1)(n_i-2)/k + 1$. 
If both $G_1$ and $G_2$ belong to $\mathcal{G}_k$, then, since $G_1\cap G_2=xy$, the definition of $\mathcal{G}_k$ implies that $G \in \mathcal{G}_k$, a contradiction. 
Therefore, the above inequality is strict for at least one of $G_1$ and $G_2$. 
Consequently, 
\[ \begin{aligned} e(G) &=e(G_1)+e(G_2)-1\\ &< \left(\frac{k+1}{k}(n_1-2)+1\right) + \left(\frac{k+1}{k}(n_2-2)+1\right) -1\\ &= \frac{k+1}{k}(n_1+n_2-4)+1\\ &= \frac{k+1}{k}(n-2)+1, \end{aligned} \] 
contradicting our assumption. 
Therefore, the proof is complete.
\end{proof}

\section*{Acknowledgements}
The author is deeply grateful to Professor Kenta Ozeki for many valuable suggestions and insightful comments. 
During the preparation of this manuscript, the author used ChatGPT (OpenAI) to assist with drafting and revising parts of the Abstract and the Introduction.  
All mathematical content, references, and the final text were checked and approved by the author. 
The author was supported by the YNU-SPRING Program of Yokohama National University (Grant Nos.~74W8110001 and 74W8110002).

\end{document}